\documentclass[12pt, a4paper]{article}
\title{Matrix roots of order n of a multivariate polynomial}
\usepackage{authblk}
\small{\author[1]{Jordan Noumi Bakop}}
\small{\author[2]{Yves Baudelaire Fomatati}}

{\affil[1]{Department of Mathematics, Faculty of Science, Yaound\'{e} I University, Cameroon, \textit{jordanoumi@gmail.com}.}}

{\affil[2]{Department of Mathematics, The University of Bamenda, Cameroon, \textit{ fomatati.yves@uniba.cm}.}}

\date{}
\usepackage{latexsym}
\usepackage{amsmath,amssymb}
\usepackage[square,comma,numbers,sort&compress]{natbib}
\usepackage{geometry,mathtools}
\usepackage{resizegather}

\usepackage{leqno}
\usepackage{amsfonts}

\usepackage{enumerate}

\usepackage{amsthm}
\usepackage[utf8,latin9]{inputenc}

\theoremstyle{plain}
\newtheorem{remark}{Remark}[section]
\theoremstyle{plain}
\newtheorem{lemma}{Lemma}[section]
\theoremstyle{plain}
\newtheorem{proposition}{Proposition}[section]
\theoremstyle{plain}
\newtheorem{theorem}{Theorem}[section]
\theoremstyle{plain}
\newtheorem{definition}{Definition}[section]
\theoremstyle{plain}

\theoremstyle{plain}

\theoremstyle{plain}

\newtheorem{example}{Example}[section]

\theoremstyle{plain}

\usepackage[all, 2cell]{xy}
\usepackage{txfonts}

\begin{document}
\maketitle
\begin{quote}
  \textbf{Abstract}
\end{quote}

In this paper, we construct the matrix nth root of a multivariate polynomial with coefficients in a field K which possesses an nth root of unity. Moreover, we explain why this concept is interesting. Finally, we give several examples to illustrate our construction.
\\\\
\textbf{Keywords.} Matrix factorization of polynomials, multivariate polynomials.\\
\textbf{Mathematics Subject Classification (2020).} 15A23, 18A05.
\\\\In the sequel, $K$ is a field.

	\section{Introduction}
	Let $f\in K[X_1,...,X_n]$ be a multivariate polynomial with coefficients in $K$ and indeterminates $X_1,...,X_n$. We further assume that $K$ has a primitive nth root of unity.
	\begin{definition} \cite{diveris2019matrix}
		A matrix square root of order $m$ of a polynomial  $f\in K[X_1,...,X_n]$ is an $m \times m$ matrix $A$ such that
		\[
		A^{2} = f \, I_m
		\]
		$I_m$ is the $m \times m$ identity matrix.
	\end{definition}
	
	In the sequel, we generalize this result for $n \geq 3$. The case $n=2$ was treated in \cite{diveris2019matrix}. The main result in \cite{diveris2019matrix} is that \textit{every polynomial $f\in K[X_1,...,X_n]$ has a matrix square root}.
	
	\section{Main results}
We present the main results in this section. We are going to show that every multivariate polynomial $f\in K[X_1,...,X_n]$ with coefficients in $K$ and indeterminates $X_1,...,X_n$ has a matrix nth root.  \\
We first define the concept of a \textit{manomial} in order to facilitate our discussion. 	
	
	\begin{definition}
	A \textbf{manomial} is a product of several variables and/or scalars raised to powers that could be different.
	\end{definition}
	
	\begin{example}
		\quad $x^4 y z^5, \quad 8x, \quad 2x^3$.
	\end{example}
	
	\begin{proposition}\label{rac_cub1}
		\quad If $f$ is a polynomial defined by
		\[
		f = x_1^{\alpha_1} x_2^{\alpha_2} x_3^{\alpha_3}
		\]
		with manomials $x_1, x_2, x_3$, then $f$ admits a matrix third root of unity of order 3.
	\end{proposition}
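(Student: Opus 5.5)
The plan is to write down an explicit $3\times 3$ matrix built from the three manomial factors and check directly that its cube is $f\,I_3$. No earlier result is needed beyond the definition of a matrix root, read with exponent $3$: we seek $A$ of order $3$ with $A^{3}=f\,I_3$. Also, no primitive cube root of unity in $K$ is needed for this monomial case. Put
\[
a=x_1^{\alpha_1},\qquad b=x_2^{\alpha_2},\qquad c=x_3^{\alpha_3}.
\]
Each of these is again a manomial, and $f=abc$. Mimicking the square-root case of \cite{diveris2019matrix}, where one uses the $2\times 2$ antidiagonal matrix with entries the two factors, I would take the ``weighted cyclic permutation'' matrix
\[
A=\begin{pmatrix} 0 & a & 0\\ 0 & 0 & b\\ c & 0 & 0\end{pmatrix}.
\]

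The key step is the computation of $A^{3}$. Rather than multiplying matrices entry by entry, I would track the action on the standard basis $e_1,e_2,e_3$. From the columns of $A$ we read off
\[
Ae_1=c\,e_3,\qquad Ae_2=a\,e_1,\qquad Ae_3=b\,e_2.
\]
Hence
\[
A^{3}e_1=A^{2}(c\,e_3)=A(cb\,e_2)=cba\,e_1 .
\]
By the same three-step cycle, $A^{3}e_2=abc\,e_2$ and $A^{3}e_3=bca\,e_3$. Since $K[x_1,x_2,x_3]$ is commutative, each of these products equals $f$, so $A^{3}=f\,I_3$. As a sanity check, I would also compute $A^{2}$ explicitly; it should again be a weighted permutation matrix, with the products $ab$, $bc$, $ca$ placed along the other cyclic pattern.

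There is essentially no obstacle here. The only point requiring care is the bookkeeping of which product lands on which diagonal entry, and commutativity of the polynomial ring makes all three products equal to $f$. I would close with a remark that the same construction generalizes to the intended $n$th-root result. For $f=m_1\cdots m_n$ a product of $n$ manomials, take the $n\times n$ matrix with $m_1,\dots,m_{n-1}$ on the superdiagonal and $m_n$ in the bottom-left corner. The identical basis-vector argument gives $A^{n}=f\,I_n$. This is what the subsequent general statements will presumably build on, by combining such blocks for sums of monomials.
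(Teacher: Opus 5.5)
Your proof is correct and is essentially the paper's argument: the paper uses the weighted cyclic permutation matrix with $x_1^{\alpha_1}$ in position $(1,3)$, $x_2^{\alpha_2}$ in $(2,1)$ and $x_3^{\alpha_3}$ in $(3,2)$, which is the transpose of your $A$ up to relabeling the factors, and it simply asserts $A_3^3 = f\,I_3$. Your basis-vector computation supplies the verification that the paper leaves implicit, and your remark that no primitive cube root of unity is needed in this monomial case is accurate.
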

	
	\begin{proof}
		\quad Fix one of the manomials $x_i^{\alpha_i}$ \\
		(Without loss of generality let us suppose $x_1^{\alpha_1}$ has been fixed) \\
		Let $J_2$ be the matrix obtained by replacing the coefficients ``1'' of the identity matrix $I_2$ by the $x_j^{\alpha_j}$, $j \neq 1$. Thus, we take:
		\[
		J_2 =
		\begin{pmatrix}
			x_2^{\alpha_2} & 0 \\
			0 & x_3^{\alpha_3}
		\end{pmatrix}
		\]
		
		We can now let
		\[
		A_3 =
		\begin{pmatrix}
			0 & 0 & x_1^{\alpha_1} \\
			x_2^{\alpha_2} & 0 & 0 \\
			0 & x_3^{\alpha_3} & 0
		\end{pmatrix}
		\]
		
		therefore:
		
		\[
		\begin{pmatrix}
			0 & 0 & x_1^{\alpha_1} \\
			x_2^{\alpha_2} & 0 & 0 \\
			0 & x_3^{\alpha_3} & 0
		\end{pmatrix}^3
		= x_1^{\alpha_1} x_2^{\alpha_2} x_3^{\alpha_3} \, I_3
		\]
		
		\[
		(A_3)^3 = f \cdot I_3 \quad \text{with } f = x_1^{\alpha_1} x_2^{\alpha_2} x_3^{\alpha_3} \text{ and } I_3 \text{ the identity matrix of order } 3.
		\]	
	\end{proof}
	
	\begin{example}
		\begin{enumerate}
			\item Let $f = x^2 y z = (x^2)(y)(z)$ \\
			\[
			A = \begin{pmatrix}
				0 & 0 & x^2 \\
				y & 0 & 0 \\
				0 & z & 0
			\end{pmatrix}
			A^2 = \begin{pmatrix}
				0 & x^2 z & 0 \\
				0 & 0 & x^2 y \\
				yz & 0 & 0
			\end{pmatrix}
			A^3 = \begin{pmatrix}
				x^2 y z & 0 & 0 \\
				0 & x^2 y z & 0 \\
				0 & 0 & x^2 y z
			\end{pmatrix}
			\]
			$A$ is indeed a matrix cube root of $f$
			\item Let $g = x^2tz^3y = (x^2 t)(z)(z^2 y)$ \\
			\[
			B = \begin{pmatrix}
				0 & 0 & x^2 t \\
				z & 0 & 0 \\
				0 & y z^2 & 0
			\end{pmatrix}
			B^2 = \begin{pmatrix}
				0 & x^2 y z^2 t & 0 \\
				0 & 0 & x^2 z t \\
				y z^3 & 0 & 0
			\end{pmatrix}
			B^3 = \begin{pmatrix}
				x^2 y z^3 t & 0 & 0 \\
				0 & x^2 y z^3 t & 0 \\
				0 & 0 & x^2 y z^3 t
			\end{pmatrix}
			\]
			$B$ is indeed a matrix cube root of $g$.
		\end{enumerate}	
	\end{example}

	\begin{proposition}
		Let $A=(a_{ij})$ be a matrix.\\
		\[
		A^n = \sum_{i,j} a_{ij}^{(n)} E_{ij}
		\]
		where $a_{ij}^{(n)}$ is the $(i,j)$ coefficient of the matrix $A^n$ and $E_{ij}$ is the elementary matrix.\\
		
		\[
		a_{ij}^{(n)} = \sum_{k=1}^{n} a_{ik} a_{kj}^{(n-1)}
		\]
		
		\[
		a_{ij}^{(1)} = a_{ij}
		\]
		
	\end{proposition}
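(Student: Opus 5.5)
The plan is to prove the statement by induction on the exponent, using only two facts: the definition of matrix multiplication and the expansion of any matrix in the basis of elementary matrices. First I will fix the setting. Let $A=(a_{ij})$ be an $m\times m$ matrix with entries in a commutative ring. The case of interest is $K[X_1,\dots,X_n]$, since the entries are manomials. The first displayed identity does not depend on the exponent at all: any $m\times m$ matrix $B=(b_{ij})$ satisfies $B=\sum_{i,j} b_{ij}E_{ij}$. This holds because $E_{ij}$ has a single nonzero entry, equal to $1$, in position $(i,j)$, so the two sides agree entrywise. Applying this with $B=A^n$ and writing $a_{ij}^{(n)}$ for the $(i,j)$ entry of $A^n$ gives $A^n=\sum_{i,j}a_{ij}^{(n)}E_{ij}$ immediately.

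The content of the proposition is the recursion. The base case is $a_{ij}^{(1)}=a_{ij}$, which holds because $A^1=A$. For the inductive step I will write $A^n=A\cdot A^{n-1}$ and read off the $(i,j)$ entry from the definition of the product:
\[
a_{ij}^{(n)}=(A\,A^{n-1})_{ij}=\sum_{k} a_{ik}\,a_{kj}^{(n-1)} .
\]
Alternatively, I can obtain the same formula from the elementary-matrix expansion. Substitute $A=\sum_{i,k}a_{ik}E_{ik}$ and $A^{n-1}=\sum_{l,j}a_{lj}^{(n-1)}E_{lj}$, and use the rule $E_{ik}E_{lj}=\delta_{kl}E_{ij}$. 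Collecting the coefficient of $E_{ij}$ then reproduces the recursion. This second route keeps the proof in the language of the statement.

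The only genuine obstacle is an indexing issue rather than a mathematical one. In the statement the summation index $k$ runs from $1$ to $n$, the same letter as the exponent. The sum must in fact run over $k=1,\dots,m$, the size of the matrix. The two ranges coincide exactly in the case the paper cares about, an $n\times n$ matrix raised to the $n$-th power, as with $A_3$ in Proposition~\ref{rac_cub1}. I would therefore state explicitly at the outset that $A$ is taken to be $n\times n$ (or else replace the upper limit by $m$), and after that the induction above is routine.

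Finally, I would note why the recursion is useful later. Applied to a matrix with the cyclic shape of $A_3$, it shows that each multiplication by $A$ moves the nonzero entries one step along the cycle. After $n$ steps every diagonal entry of $A^n$ is the full product of the manomials. This is the mechanism needed to generalize Proposition~\ref{rac_cub1} to $n$-th roots.
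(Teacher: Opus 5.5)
Your proof is correct. The paper states this proposition without proof and treats the recursion simply as the entry formula for the product $A\cdot A^{n-1}$, which is your direct route. No genuine induction is actually needed, only $A^n=A\,A^{n-1}$ and associativity.

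Your indexing correction matters more than you suggest. In the proof of Theorem~\ref{rac_cub2}, the paper iterates the recursion at the lower exponents $n-1, n-2,\dots$ (for example, passing to $a_{n,l}^{(n-l)}$), while the sum still runs over all $n$ rows. So what is really used is
$a^{(r)}_{ij}=\sum_{k=1}^{m}a_{ik}\,a^{(r-1)}_{kj}$, with $m$ the size of the matrix and $r\ge 2$ arbitrary. Your proof establishes exactly this decoupled version. The literal statement, with the exponent as the upper limit, would not justify those steps.

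Likewise, in the final proposition the recursion is applied to a block matrix $B$ whose entries $B_{l,k}$ are $p\times p$ matrices, which need not commute. Your restriction to a commutative ring is therefore unnecessary. The direct argument, via block multiplication, gives the same formula over any associative ring, provided the factor order $a_{ik}\,a^{(r-1)}_{kj}$ is kept, which you do.
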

	
	\begin{theorem}\label{rac_cub2}
		Let $n \geq 3$. \\
		Any polynomial $f = x_1^{\alpha_1} \cdots x_n^{\alpha_n}$ where $x_i$ are manomials possesses a matrix nth root of order $n$.
	\end{theorem}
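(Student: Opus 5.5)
We generalize the construction in the proof of Proposition~\ref{rac_cub1} to an $n\times n$ weighted cyclic permutation matrix. Set $m_i = x_i^{\alpha_i}$ for $i=1,\dots,n$. Each $m_i$ is a manomial, so it is an element of the commutative ring $K[X_1,\dots]$, and we only use that these entries commute. Define
\[
A_n = m_1 E_{1n} + \sum_{i=2}^{n} m_i E_{i,i-1},
\]
so that $A_n$ carries $m_1$ in its upper right corner and $m_2,\dots,m_n$ on its subdiagonal. This is exactly the matrix $A_3$ when $n=3$. The claim is that $A_n^n = m_1 m_2\cdots m_n\, I_n = f\, I_n$.

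The key step is to identify $A_n$ with a weighted shift. Let $e_1,\dots,e_n$ be the standard basis and read indices modulo $n$. Then $A_n e_j = m_{j+1} e_{j+1}$, with the convention $m_{n+1}=m_1$ and $e_{n+1}=e_1$. Indeed, column $j<n$ has its only nonzero entry $m_{j+1}$ in row $j+1$, and column $n$ has $m_1$ in row $1$. We then prove by induction on $k$ that
\[
A_n^k e_j = \Big(\prod_{t=1}^{k} m_{j+t}\Big) e_{j+k}.
\]
The case $k=1$ is the formula above. The inductive step applies $A_n$ once more and uses commutativity of the coefficient ring to pull the scalar product through.

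Taking $k=n$ gives $e_{j+n}=e_j$. The product $\prod_{t=1}^{n} m_{j+t}$ runs over all residues modulo $n$ exactly once, so it equals $m_1\cdots m_n=f$ for every $j$. Hence $A_n^n e_j = f e_j$ for all $j$, which means $A_n^n = f I_n$.

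Alternatively, the same induction can be phrased through the recursion $a^{(k)}_{ij}=\sum_{l} a_{il}a^{(k-1)}_{lj}$ of the preceding Proposition. The statement to prove is then that $A_n^k$ has a single nonzero entry in each column, located in row $j+k \pmod n$. The only delicate point is the bookkeeping of indices modulo $n$, namely checking that the wrap-around entry $m_1$ at position $(1,n)$ behaves like the other shift entries. The basis-vector formulation removes this difficulty, so I expect no real obstacle beyond setting up that convention cleanly. The choice of which manomial is placed in the corner, as in the ``fix one of the manomials'' step of Proposition~\ref{rac_cub1}, is irrelevant by the cyclic symmetry of the product.
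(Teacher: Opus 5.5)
Your proposal is correct and is essentially the paper's proof: you use the same weighted cyclic shift matrix ($m_1$ in position $(1,n)$, $m_2,\dots,m_n$ on the subdiagonal) and compute its $n$th power by following the cycle. The only difference is bookkeeping. The paper tracks entries through the recursion $a^{(n)}_{ij}=\sum_k a_{ik}a^{(n-1)}_{kj}$, whereas your formula $A_n^k e_j=\bigl(\prod_{t=1}^{k} m_{j+t}\bigr)e_{j+k}$ handles the wrap-around and the vanishing of the off-diagonal entries together, more cleanly than the paper's index chase.
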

	
	\begin{proof}
		Let $A = (a_{ij})$ where
		\[
		\begin{cases}
			a_{1,n} = x_1^{\alpha_1} \\
			a_{i,i-1} = x_i^{\alpha_i} \quad \text{pour } 2 \leq i \leq n \\
			0 \quad \text{everywhere else}
		\end{cases}
		\]
		In other words,
		\[
		A =
		\begin{pmatrix}
			0 & x_1^{\alpha_1} \\
			J_{n-1} & 0
		\end{pmatrix}
		\]
		
		where $J_{n-1}$ is obtained by replacing the coefficients $"1"$ by the $x_j^{\alpha_j},\ j \neq 1$.
		\begin{enumerate}
			\item \[
			\begin{aligned}
				a_{11}^{(n)} &= \sum_{k=1}^{n} a_{1k} a_{k1}^{(n-1)} \\
				&= a_{1,n} \cdot a_{n\,n-1} \cdots a_{3,2}\, a_{2,1} \\
				&= x_1^{\alpha_1} \cdots x_n^{\alpha_n}.
			\end{aligned}
			\]
			
			\item Let $l \neq 1$,
			\[
			\begin{aligned}
				a_{ll}^{(n)} &= \sum_{k=1}^{n} a_{lk} a_{k,l}^{(n-1)} \\
				&= a_{l,l-1}\, a_{l-1\,l}^{(n-1)} \\
				&= \left( \prod_{p=2}^{l} a_{p,p-1} \right) \times a_{1,n} \times a_{n,l}^{(n-l)} \\
				&= \left( \prod_{p=2}^{l} a_{p,p-1} \right) \times a_{1,n} \times a_{n,n-1} \times \cdots \times a_{l+1,l} \\
				&= x_1^{\alpha_1} \cdots x_n^{\alpha_n}.
			\end{aligned}
			\]
			
			\item Let $l, p$ be such that $l \neq p$
			\[
			\begin{aligned}
				a_{l,p}^{(n)} &= \sum_{k=1}^{n} a_{l,k} a_{k,p}^{(n-1)} \\
				&= \left( \prod_{t=2}^{l} a_{t,t-1} \right) \times a_{1,n}\, a_{n,p}^{(n-l)} \\
				&= \left( \prod_{t=2}^{l} a_{t,t-1} \right) \times a_{1,n} \times a_{n,n-1} \times \cdots \times a_{l+1,p}.
			\end{aligned}
			\]
			and $a_{l+1,p}=0$ by definition of the matrix $A$.
			
			Hence,
			\[
			(A)^n = x_1^{\alpha_1} \cdots x_n^{\alpha_n}\, I_n
			\]
		\end{enumerate}	
	\end{proof}

	\begin{example}
		\begin{enumerate}
			\item 4th root \\
			$h = x^4 y t^2 = (x^4)(y)(t^2)(1)$. Here $\alpha$ is the primitive 4th root of unity. \\
			Let
				\[
			D = \begin{pmatrix}
				0 & 0 & 0 & x^4 \\
				y & 0 & 0 & 0 \\
				0 & t^2 & 0 & 0 \\
				0 & 0 & 1 & 0
			\end{pmatrix}
			D^2 = \begin{pmatrix}
				0 & 0 & x^4 & 0 \\
				0 & 0 & 0 & x^4 y \\
				t^2 y & 0 & 0 & 0 \\
				0 & t^2 & 0 & 0
			\end{pmatrix}
			D^4 = \begin{pmatrix}
				x^4 t^2 y & 0 & 0 & 0 \\
				0 & x^4 t^2 y & 0 & 0 \\
				0 & 0 & x^4 t^2 y & 0 \\
				0 & 0 & 0 & x^4 t^2 y
			\end{pmatrix}
			\]
			
			So, $B$ is indeed a matrix fourth root of $f$.
			\item 5th root\\
			$f = x y^3 z t = (x)(y)(y^2)(z)(t)$. Here $\alpha$ is the primitive fifth root of unity. \\
			\[
			A = \begin{pmatrix}
				0 & 0 & 0 & 0 & x \\
				y & 0 & 0 & 0 & 0 \\
				0 & y^2 & 0 & 0 & 0 \\
				0 & 0 & z & 0 & 0 \\
				0 & 0 & 0 & t & 0
			\end{pmatrix} ;
			A^2 = \begin{pmatrix}
				0 & 0 & 0 & xt & 0 \\
				0 & 0 & 0 & 0 & xy \\
				y^3 & 0 & 0 & 0 & 0 \\
				0 & y^2 z & 0 & 0 & 0 \\
				0 & 0 & zt & 0 & 0
			\end{pmatrix} ;
			A^3 = \begin{pmatrix}
				0 & 0 & xtz & 0 & 0 \\
				0 & 0 & 0 & xyt & 0 \\
				0 & 0 & 0 & 0 & xy^3 \\
				y^3 z & 0 & 0 & 0 & 0 \\
				0 & y^2 z t & 0 & 0 & 0
			\end{pmatrix} ;
			\]
			\[
			A^4 = \begin{pmatrix}
				0 & xy^2zt & 0 & 0 & 0 \\
				0 & 0 & xyzt & 0 & 0 \\
				0 & 0 & 0 & xy^3t & 0 \\
				0 & 0 & 0 & 0 & xy^3z \\
				y^3zt & 0 & 0 & 0 & 0
			\end{pmatrix} ;
			A^5 = x y^3 z t \begin{pmatrix}
				1 & 0 & 0 & 0 & 0 \\
				0 & 1 & 0 & 0 & 0 \\
				0 & 0 & 1 & 0 & 0 \\
				0 & 0 & 0 & 1 & 0 \\
				0 & 0 & 0 & 0 & 1
			\end{pmatrix}
			\]
			Thus, $A$ is indeed a matrix fifth root of $f$.
		\end{enumerate}	
	\end{example}

	\begin{theorem}
		Let
		\[
		f_k = g_1 h_1 l_1 + \cdots + g_k h_k l_k \quad (k \geq 2)
		\]
		and each $g_i, h_i, l_i$ is a manomial. We assume that there exists a primitive cube root   $\alpha$ in $K$.
		Then $f_k$ possesses a matrix cube root of order $3^k$.
	\end{theorem}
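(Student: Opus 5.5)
The plan is to build the cube root by induction on $k$, using the classical ``$q$-commuting'' trick. The key algebraic fact is this. Suppose $X,Y$ are square matrices with polynomial entries such that $YX=\beta XY$, where $\beta\in\{\alpha,\alpha^2\}$ is a primitive cube root of unity. Then
\[
(X+Y)^3=X^3+Y^3 .
\]
To see this, expand $(X+Y)^3$ into the eight words in $X,Y$. The three words with two $X$'s and one $Y$ are $XXY$, $XYX$ and $YXX$. Moving $Y$ to the right, they become $X^2Y,\ \beta X^2Y,\ \beta^2X^2Y$, so their sum is $(1+\beta+\beta^2)X^2Y=0$. The same cancellation happens for the three words with one $X$ and two $Y$'s, giving $(1+\beta+\beta^2)XY^2=0$. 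This is where the hypothesis that $K$ contains a primitive cube root of unity is used. Since $K[x_1,\dots]$ is commutative, the scalar coefficients commute with everything, so the computation is legitimate over the polynomial ring.

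Next I fix the $3\times 3$ matrices
\[
D=\begin{pmatrix}1&0&0\\0&\alpha&0\\0&0&\alpha^2\end{pmatrix},\qquad
N_i=\begin{pmatrix}0&0&g_i\\h_i&0&0\\0&l_i&0\end{pmatrix}.
\]
By Proposition \ref{rac_cub1}, $N_i^3=g_ih_il_i\,I_3$, and clearly $D^3=I_3$. A direct check on basis vectors shows that $D$ and $N_i$ commute up to a primitive cube root of unity. Indeed, $N_i$ sends $e_1\mapsto h_ie_2$, $e_2\mapsto l_ie_3$ and $e_3\mapsto g_ie_1$, so $N_i$ raises the ``$D$-weight'' by one step. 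Hence $DN_i=\beta N_iD$ with $\beta=\alpha$ or $\alpha^2$ depending on the convention; either value is primitive, which is all that is needed.

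The induction runs as follows. Put $M_1=N_1$, so $M_1^3=g_1h_1l_1\,I_3$; this is the case $k=1$, and the theorem only asks for $k\ge 2$. Given $M_{k-1}$ of order $3^{k-1}$ with $M_{k-1}^3=f_{k-1}I$, define the matrix of order $3^k$
\[
M_k=M_{k-1}\otimes D+I_{3^{k-1}}\otimes N_k .
\]
Set $X=M_{k-1}\otimes D$ and $Y=I\otimes N_k$. Using the mixed-product rule for Kronecker products, which holds over a commutative ring, we get $X^3=M_{k-1}^3\otimes D^3=f_{k-1}I$ and $Y^3=I\otimes N_k^3=g_kh_kl_k\,I$. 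For the commutation relation, $XY=M_{k-1}\otimes DN_k$ and $YX=M_{k-1}\otimes N_kD$, so $XY=\beta YX$. The key fact from the first paragraph then gives
\[
M_k^3=(f_{k-1}+g_kh_kl_k)\,I_{3^k}=f_k\,I_{3^k},
\]
which is the claim.

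The only step needing real care is the commutation check $DN_k=\beta N_kD$ together with the cancellation $1+\beta+\beta^2=0$. Once those are verified, everything else is bookkeeping with Kronecker products. I would also include the explicit $9\times 9$ matrix for $k=2$ as a sanity check.
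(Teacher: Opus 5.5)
Your proof is correct and is essentially the paper's construction: after swapping the order of the Kronecker factors (a permutation similarity), your $M_k=M_{k-1}\otimes D+I_{3^{k-1}}\otimes N_k$ becomes the paper's block matrix $\begin{pmatrix} A_{k-1} & g_kI & 0\\ 0&\alpha A_{k-1}& h_kI\\ l_kI&0&\alpha^2A_{k-1}\end{pmatrix}$, with the cyclic shift running in the opposite direction. The paper verifies this case by expanding the block cube directly and using $1+\alpha+\alpha^2=0$ to kill the off-diagonal blocks; you package the same cancellation as the $q$-commutation identity $(X+Y)^3=X^3+Y^3$, which is exactly how the paper argues in its later general-$n$ theorem.
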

	
	\begin{proof}
		\begin{itemize}
			\item For $k=1$, let $f_1 = g_1 h_1 l_1$ which has a matrix nth root according to  proposition $\ref{rac_cub1}$
		\end{itemize}
		
		\begin{itemize}
			\item Assume a matrix cube root of $f_{k-1}$ d'ordre $3^{k-1}$ has been constructed, call it $A_{k-1}$.
		\end{itemize}
		Let
		\[
		A_k =
		\begin{pmatrix}
			A_{k-1} & X_1 & 0 \\
			0 & \alpha A_{k-1} & X_2 \\
			X_3 & 0 & \alpha^2 A_{k-1}
		\end{pmatrix}
		\]
		where $X_1 = g_k I_{3^{k-1}}$, $X_2 = h_k I_{3^{k-1}}$, $X_3 = l_k I_{3^{k-1}}$, and $\alpha$ is a primitive cube root of unity.
		   \[
		(A_k)^3 =  \begin{pmatrix}
	     A_{k-1}^3 + X_1 X_2 X_3 & (1+\alpha+\alpha^2) X_1 A_{k-1}^2 & (1+\alpha+\alpha^2) X_1 X_2 A_{k-1} \\
			(1+\alpha+\alpha^2) X_2 X_3 A_{k-1} & \alpha^3 A_{k-1}^3 + X_2 X_3 X_1 & (\alpha^2 + \alpha^3 + \alpha^4) X_2 A_{k-1}^2 \\
			(1 + \alpha^2 + \alpha^4) A_{k-1}^2 X_3 & (1 + \alpha + \alpha^2) X_3 X_1 A_{k-1} & \alpha^6 A_{k-1}^3 + X_3 X_1 X_2
		     \end{pmatrix}
		     \]
		
		     \[
		    (A_k)^3 =   \begin{pmatrix}
			(A_{k-1}^3 + X_1 X_2 X_3) I_{3^{k-1}} & 0 & 0 \\
			0 & (A_{k-1}^3 + X_1 X_2 X_3) I_{3^{k-1}} & 0 \\
			0 & 0 & (A_{k-1}^3 + X_1 X_2 X_3) I_{3^{k-1}}
		\end{pmatrix}
		\]
		\[
		(A_k)^3 = (f_{k-1} + g_k h_k l_k) \, I_{3 \times 3^{k-1}} = f_k \, I_{3^k}
		\]
		Hence, $f_k$ has a matrix cube root of size $3^k$.
	\end{proof}
	
	\begin{example}
		\begin{enumerate}
			\item Consider $g = x y z + xt$. Here $\alpha$ is a matrix cube root of unity. \\
			Let
			\[
			A_1 = \begin{pmatrix}
				0 & 0 & x \\
				y & 0 & 0 \\
				0 & z & 0
			\end{pmatrix} ;
			(A_1)^2 = \begin{pmatrix}
				0 & xz & 0 \\
				0 & 0 & xy \\
				yz & 0 & 0
			\end{pmatrix} ;
			(A_1)^3 = \begin{pmatrix}
				xyz & 0 & 0 \\
				0 & xyz & 0 \\
				0 & 0 & xyz
			\end{pmatrix}
			= xyz \, I_3
			\]
			
			\[
			A_2 =
			\begin{pmatrix}
				A_1 & x I_3 & 0 \\
				0 & \alpha A_1 & t I_3 \\
				I_3 & 0 & \alpha^2 A_1
			\end{pmatrix} ;
			(A_2)^3 = \begin{pmatrix}
				A_1^3 + xt I_3 & x(1+ \alpha + \alpha^2)A_1^2 & xt(1+ \alpha + \alpha^2)A_1 \\
				t(1+ \alpha + \alpha^2)A_1 & \alpha^3 A_1^3 + xt I_3 & \alpha^2 t(1+ \alpha + \alpha^2)A_1^2 \\
				(1+ \alpha^2 + \alpha^4)A_1^2 & x(1+ \alpha + \alpha^2)A_1 & \alpha^6 A_1^3 + xt I_3
			\end{pmatrix}
			\]
			 $A_2$ is indeed a matrix cube root of $g$.
			
			\item Consider $h = x^2 y + 3x t^2 + z^3$. Here, $\alpha$ is a matrix cube root of unity.
			
			Let
			
			\[
			D_1 = \begin{pmatrix}
				0 & 0 & x \\
				x & 0 & 0 \\
				0 & y & 0
			\end{pmatrix} ;
			(D_1)^3 = \begin{pmatrix}
				x^2y & 0 & 0 \\
				0 & x^2y & 0 \\
				0 & 0 & x^2y
			\end{pmatrix}
			= x^2y \, I_3
			\]

			\[
			D_2 =
			\begin{pmatrix}
				D_1 & 3x I_3 & 0 \\
				0 & \alpha D_1 & z I_3 \\
				z I_3 & 0 & \alpha^2 D_1
			\end{pmatrix}
			(D_2)^3 = \begin{pmatrix}
				D_1^3 + 3xz I_3 & 3x (1+ \alpha + \alpha^2) D_1^2 & 3xz(1+ \alpha + \alpha^2) D_1 \\
				z^2(1+ \alpha + \alpha^2)D_1 & \alpha^3 D_1^3+3xzI_3 & \alpha^2 z(1+\alpha + \alpha^2)D_1^2 \\
				z(1+ \alpha^2 + \alpha^4)D_1^2 & 3xz(1+ \alpha + \alpha^2)D_1 & \alpha^6 D_1^3 + 3xz I_3
			\end{pmatrix}
			\]

			\[
			D_3 =
			\begin{pmatrix}
				D_2 & z I_9 & 0 \\
				0 & \alpha D_2 & z I_9 \\
				z I_9 & 0 & \alpha^2 D_2
			\end{pmatrix}
			(D_3)^3 = \begin{pmatrix}
				D_2^3 + z^3 I_9 & z(1+ \alpha + \alpha^2)D_2^2 & z^2(1+ \alpha + \alpha^2)D_2 \\
				z^2(1+ \alpha + \alpha^2)D_2 & \alpha^3 D_2^3 + z^3 I_9 & \alpha^2 z(1+ \alpha + \alpha^2)D_2^2 \\
				z(1+ \alpha^2 + \alpha^4)D_2^2 & z^2(1+ \alpha + \alpha^2)D_2 & \alpha^6 D_2^3 + z^3 I_9
			\end{pmatrix}
			\]
			$D_3$ is indeed a matrix cube root of $h$.
		\end{enumerate}	
	\end{example}

	\begin{remark}
		If $A$ is a matrix nth root of a polynomial $f$, then $A^T$ is also a matrix nth root of $f$.
	\end{remark}
	
	\begin{definition}[\cite{kac2002quantum}]
		Let $A$ and $B$ be two matrices of the same size. If $AB = q BA$, then
		\[
		(A+B)^n = \sum_{k=0}^{n} \begin{pmatrix} n \\ k \end{pmatrix}_q A^k B^{n-k}
		\]
		$\begin{pmatrix} n \\ k \end{pmatrix}_q$ is the $q$-binomial coefficient.
	\end{definition}

	\begin{lemma}[\cite{kac2002quantum}]\label{binomial}
		If $q$ is an nth primitive root of unity, then
		\[
		(A+B)^n = A^n + B^n
		\]
	\end{lemma}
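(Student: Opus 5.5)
We will prove the lemma by expanding $(A+B)^n$ with the $q$-binomial formula from the preceding Definition, and showing that every middle coefficient vanishes when $q$ is a primitive $n$th root of unity. Since $AB=qBA$, that formula gives
\[
(A+B)^n=\sum_{k=0}^{n}\begin{pmatrix} n\\ k\end{pmatrix}_q A^kB^{n-k}.
\]
The extreme terms are $k=n$ and $k=0$. They contribute $A^n$ and $B^n$, because $\binom{n}{0}_q=\binom{n}{n}_q=1$. So it suffices to show that $\binom{n}{k}_q=0$ for $1\le k\le n-1$.

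To see this we will use the product formula
\[
\begin{pmatrix} n\\ k\end{pmatrix}_q=\frac{(1-q^n)(1-q^{n-1})\cdots(1-q^{n-k+1})}{(1-q^k)(1-q^{k-1})\cdots(1-q)}.
\]
Since this is a quotient of polynomials in $q$, we will instead argue with the identity
\[
\begin{pmatrix} n\\ k\end{pmatrix}_q\,(1-q)(1-q^2)\cdots(1-q^k)=(1-q^n)(1-q^{n-1})\cdots(1-q^{n-k+1}),
\]
which holds in $\mathbb{Z}[q]$ and hence after specialization. Specialize at a primitive $n$th root of unity $q\in K$. The right-hand side contains the factor $1-q^n=0$. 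On the left, $k\le n-1$ and primitivity give $q^j\neq 1$ for $1\le j\le k$, so every factor $1-q^j$ is nonzero. Because $K$ is a field, we conclude $\binom{n}{k}_q=0$.

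This leaves $(A+B)^n=A^n+B^n$. No commutativity between the scalars $q$ and the matrices is needed, since $q\in K$ is central.

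The main obstacle is making sure the $q$-binomial expansion holds in the form stated, with the correct ordering $A^kB^{n-k}$ under the convention $AB=qBA$ rather than $BA=qAB$. If there is doubt, we will reprove it by induction on $n$. The inductive step uses the Pascal-type rule
\[
\begin{pmatrix} n\\ k\end{pmatrix}_q=\begin{pmatrix} n-1\\ k-1\end{pmatrix}_q+q^{k}\begin{pmatrix} n-1\\ k\end{pmatrix}_q,
\]
together with the commutation $B^{m}A=q^{-m}AB^m$, which follows from $AB=qBA$ by induction on $m$. The vanishing statement holds under either convention, because $q^{-1}$ is also a primitive $n$th root of unity. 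So either ordering yields the lemma.
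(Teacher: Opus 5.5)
The paper gives no proof of its own here; it cites Kac--Cheung right after stating the $q$-binomial expansion. Your argument is the standard one that citation rests on, and it is correct: expand, then kill each middle coefficient because $\binom{n}{k}_q\prod_{j=1}^{k}(1-q^j)$ equals a product containing the factor $1-q^n=0$, while every $1-q^j$ with $1\le j\le k<n$ is nonzero. Your worry about ordering is also justified, since under $AB=qBA$ one gets $(A+B)^2=A^2+(1+q^{-1})AB+B^2$, so your induction with $B^mA=q^{-m}AB^m$ actually produces the coefficients $\binom{n}{k}_{q^{-1}}=q^{-k(n-k)}\binom{n}{k}_q$ rather than the paper's displayed $\binom{n}{k}_q$, and your remark that $q^{-1}$ is again primitive is exactly what makes the lemma go through regardless.
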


	\begin{theorem}
		Let $f_k = u_1 + \cdots + u_k$, $k \geq 2$, and each $u_i$ is a product of manomials. If there exists a primitive nth root of unity $\alpha$ in $\mathbb{K}$, then
		$f_k$ possesses a matrix nth root of order $n^k$, $(n \geq 3)$.
	\end{theorem}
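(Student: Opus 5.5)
We argue by induction on $k$, mimicking the cube-root construction above but replacing the hand computation of $(A_k)^3$ by Lemma \ref{binomial}. Each $u_i$ is a product of manomials; after grouping and padding with factors equal to $1$ (as in the examples, e.g. $h=(x^4)(y)(t^2)(1)$), we write $u_i = x_{i,1}\cdots x_{i,n}$ with exactly $n$ factors. For the base case, Theorem \ref{rac_cub2} gives an $n\times n$ matrix $A_1$ with $A_1^n = u_1 I_n$. Strictly the statement starts at $k\ge 2$, but the induction will begin at $k=1$ and the step produces order $n^k$.

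For the inductive step, assume $A_{k-1}$ of size $m=n^{k-1}$ satisfies $A_{k-1}^n = f_{k-1} I_m$. Set
\[
D = \operatorname{diag}(A_{k-1}, \alpha A_{k-1}, \dots, \alpha^{n-1}A_{k-1}) = \Lambda\otimes A_{k-1},\qquad \Lambda=\operatorname{diag}(1,\alpha,\dots,\alpha^{n-1}),
\]
and let $P$ be the block cyclic matrix $C\otimes I_m$. Here $C$ is the $n\times n$ matrix of Theorem \ref{rac_cub2} built from the factors of $u_k$: it has $x_{k,1}$ in position $(1,n)$ and $x_{k,i}$ in positions $(i,i-1)$. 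Thus $P$ has blocks $x_{k,1}I_m$ in block position $(1,n)$ and $x_{k,i}I_m$ in block positions $(i,i-1)$. Put $A_k = D + P$, of size $nm=n^k$. Since the entries of $C$ are scalars (polynomials) multiplying identity blocks, $P$ commutes with $I_n\otimes A_{k-1}$. Moreover, $C^n = u_k I_n$ by Theorem \ref{rac_cub2}, so $P^n = u_k I_{n^k}$. Also $D^n = \Lambda^n\otimes A_{k-1}^n = I_n\otimes f_{k-1}I_m = f_{k-1} I_{n^k}$, because $\alpha^n=1$.

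The key step is the $q$-commutation relation. Because $C$ maps the basis vector $e_{j}$ into a multiple of $e_{j+1}$ (indices mod $n$), we get $\Lambda C = \alpha\, C\Lambda$ entrywise: the $(i,i-1)$ entry picks up $\alpha^{i-1}$ on the left and $\alpha^{i-2}$ on the right, and the corner entry $(1,n)$ gives $\alpha^0$ versus $\alpha^{n-1}=\alpha^{-1}$. Tensoring with $A_{k-1}$ and using the commutation above gives $DP = \alpha\, PD$. Since $\alpha$ is a primitive $n$th root of unity, Lemma \ref{binomial} applies with $q=\alpha$ and yields
\[
A_k^n = D^n + P^n = (f_{k-1}+u_k)\, I_{n^k} = f_k I_{n^k}.
\]

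The main obstacle is the correct verification of $DP=\alpha PD$, including the corner entry and the direction of the twist (one might instead get $q=\alpha^{-1}$, which is still primitive, so the argument survives). A secondary concern is the legitimacy of Lemma \ref{binomial} over a ring of matrices with polynomial entries. The $q$-binomial identity is purely formal once $AB=qBA$ holds with $q$ a central scalar, and $\binom{n}{j}_q=0$ for $0<j<n$ when $q$ is a primitive $n$th root of unity. I would also state the identity in the form $BA=qAB$ so that the ordering of the factors matches the lemma.
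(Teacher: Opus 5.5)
Your proposal is correct and is essentially the paper's proof. It uses the same induction from $k=1$, the same splitting of $A_k$ into the $\alpha$-twisted block diagonal $D=\Lambda\otimes A_{k-1}$ plus a block-cyclic part built from the factors of $u_k$, and the same appeal to Lemma~\ref{binomial} to obtain $A_k^n=D^n+P^n=(f_{k-1}+u_k)I_{n^k}$. The only differences are cosmetic: you take $P=C\otimes I_m$ with $C$ the matrix of Theorem~\ref{rac_cub2}, which gives $DP=\alpha PD$, whereas the paper uses its transpose, which gives $P_kD_k=\alpha D_kP_k$. You also verify the $q$-commutation entrywise, including the corner entry, where the paper only asserts it.
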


	\begin{proof}
		For $k=1$, $f_1 = u_1$ possesses a matrix nth root of order $n^1=n$ according to theorem $\ref{rac_cub1}$ \\
		
		Assume there is a matrix nth root $A_{k-1}$ of $f_{k-1}$ of order $n^{k-1}$. \\
		Let
		\[
		A_k =
		\begin{pmatrix}
			A_{k-1} & X_1 & 0 & \cdots & 0 \\
			0 & \alpha A_{k-1} & X_2 & \cdots & 0 \\
			\vdots & \vdots & \ddots & \ddots & 0 \\
			0 & 0 & 0 & \alpha^{n-2} A_{k-1} & X_{n-1}  \\
			X_n & 0 & 0 & 0 & \alpha^{n-1} A_{k-1}
		\end{pmatrix}
		\]
		
	    $A_k = P_k + D_k$ with:
		\[
		P_k =
		\begin{pmatrix}
			0 & X_1 & 0 & \cdots & 0 \\
			0 & 0 & X_2 & \cdots & 0 \\
			\vdots & \vdots & \ddots & \ddots & 0 \\
			0 & 0 & 0 & 0 & X_{n-1}  \\
			X_n & 0 & 0 & 0 & 0
		\end{pmatrix}
	      D_k =
	      \begin{pmatrix}
	      	A_{k-1} & 0 & 0 & \cdots & 0 \\
	      	0 & \alpha A_{k-1} & 0 & \cdots & 0 \\
	      	\vdots & \vdots & \ddots & \ddots & 0 \\
	      	0 & 0 & 0 & \alpha^{n-2} A_{k-1} & 0  \\
	      	0 & 0 & 0 & 0 & \alpha^{n-1} A_{k-1}
	      \end{pmatrix}
		\]
		
		We have $P_k D_k = \alpha D_k P_k$. Since $\alpha$ is a primitive nth root of unity according to lemma  $\ref{binomial}$ (Observe that $P_k$ is the transpose of the matrix constructed in the proof of theorem $\ref{rac_cub2}$).
		\[
		A_k^n = (P_k + D_k)^n = P_k^n + D_k^n.
		\]
		
		Now $P_k^n = I_n \otimes(X_1 X_2 \cdots X_n) = I_n \otimes(u_k I_{n^{k-1}}) = u_k I_{n^k}$ et $(D_k)^n = f_{k-1} I_{n^k}$.
		
		So $A_k^n = f_k I_{n^k}$, which proves that $f_k$ admits a matrix nth root of order $n^k$.
	\end{proof}

	\begin{example}
			 Consider $f = xyz + 4x^2$. We want to find the matrix fourth root of $f$. We write
			$f = (x)(y)(z)(1) + (4)(x)(x)(1)$. Here $\alpha$ is a primitive fourth root of unity.
			
			Let
			\[
			A_1 =
			\begin{pmatrix}
				0 & 0 & 0 & x \\
				y & 0 & 0 & 0 \\
				0 & z & 0 & 0 \\
				0 & 0 & 1 & 0
			\end{pmatrix};
			(A_1)^4 =
			\begin{pmatrix}
				xyz & 0 & 0 & 0 \\
				0 & xyz & 0 & 0 \\
				0 & 0 & xyz & 0 \\
				0 & 0 & 0 & xyz
			\end{pmatrix}
			=xyz I_4
			\]

			\[
			A_2 =
			\begin{pmatrix}
				A_1 & 4 I_4 & 0 & 0 \\
				0 & \alpha A_1 & x I_4 & 0 \\
				0 & 0 & \alpha^2 A_1 & x I_4 \\
				I_4 & 0 & 0 & \alpha^3 A_1
			\end{pmatrix}
			(A_2)^2 = \begin{pmatrix}
				A_1^2 & 4(1+\alpha)A_1 & 4x I_4 & 0 \\
				0 & \alpha^2 A_1^2 & x(\alpha + \alpha^2)A_1 & x^2 I_4 \\
				x I_4 & 0 & \alpha^4 A_1^2 & x(\alpha^2 + \alpha^3)A_1 \\
				(1 + \alpha^3)A_1 & 4 I_4 & 0 & \alpha^6 A_1^2
			\end{pmatrix}
			\]
			
			\[
			  (A_2)^4 = \begin{pmatrix}
			  	A_1^4 + 4x^2 I_4 & 4\epsilon A_1^3 & 4x(\epsilon+\alpha^2+\alpha^4)A_1^2 & 4x^2 \epsilon A_1 \\
			  	x^2 \epsilon A_1 & \alpha^4 A_1^4 + 4x^2 I_4 & x(\alpha+\alpha^2+\alpha^5+\alpha^6)A_1^3 & x^2(\alpha^2 \epsilon +\alpha^4+\alpha^6)A_1^2 \\
			  	x(1+\alpha^2\epsilon +\alpha^6)A_1^2 & 4x \epsilon A_1 & \alpha^8 A_1^4 + 4x^2I_4 & x(\alpha^6 \epsilon )A_1^3 \\
			  	(1+\alpha^3+\alpha^6+\alpha^9)A_1^3 & 4(\epsilon +\alpha^4+\alpha^6)A_1^2 & 4x \epsilon A_1 & \alpha^{12} A_1^4 + 4x^2 I_4
			  \end{pmatrix}
			\]
			with $\epsilon = 1+\alpha+\alpha^2+\alpha^3$.

			$A_2$ is indeed a matrix fourth root of $f$.
	\end{example}
	
	\begin{proposition}
	If $fI_p=A_1A_2 \cdots A_n$ is a matrix factorisation of $f$ with $n$ matrix factors, then there exists a matrix $B$ satisfying $fI_{np} = B^n$.
	\end{proposition}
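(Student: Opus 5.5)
The plan is to mimic the block-cyclic construction from the proof of Theorem \ref{rac_cub2}, replacing the scalar manomials by the $p\times p$ matrix factors $A_1,\dots,A_n$. Define the $np\times np$ block matrix $B$ whose only nonzero blocks are
\[
B_{1,n}=A_1,\qquad B_{i,i-1}=A_i \quad (2\le i\le n).
\]
This is exactly the matrix of Theorem \ref{rac_cub2} with each entry $x_i^{\alpha_i}$ replaced by the block $A_i$. The claim is that $B^n$ is block diagonal, with diagonal blocks equal to cyclic rotations of the product $A_1A_2\cdots A_n$.

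The first step is to compute $B^n$ blockwise. Write $B=\sum_i A_{\sigma(i)}\otimes$ (a block position). Equivalently, view $B$ as a weighted adjacency matrix of the directed $n$-cycle, in which block row $i$ has its single nonzero entry in block column $i-1$ (indices mod $n$). Then $B^m$ has exactly one nonzero block in each block row: the block in position $(i,i-m)$, equal to the ordered product of the blocks met along the cycle. Exactly as in parts 1--3 of the proof of Theorem \ref{rac_cub2}, the off-diagonal blocks of $B^n$ vanish, because a walk of length $n$ on an $n$-cycle returns to its start. The diagonal block in position $(l,l)$ is
\[
A_lA_{l-1}\cdots A_2A_1A_nA_{n-1}\cdots A_{l+1}.
\]
This follows the same chain $a_{l,l-1}a_{l-1,l-2}\cdots a_{2,1}a_{1,n}\cdots a_{l+1,l}$ as in that proof, but now order matters.

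The main obstacle is noncommutativity. In Theorem \ref{rac_cub2} the product of entries was automatically $f$. Here the block $(1,1)$ is $A_1A_nA_{n-1}\cdots A_2$, which is in the wrong order. I would fix this first by reversing the orientation: take $B$ to be the block transpose pattern, that is, $B_{i,i+1}=A_i$ for $1\le i\le n-1$ and $B_{n,1}=A_n$. This is the shape of $P_k$ in the last theorem. Then the $(l,l)$ block of $B^n$ is the cyclic product
\[
A_lA_{l+1}\cdots A_nA_1\cdots A_{l-1},
\]
and the $(1,1)$ block is exactly $A_1\cdots A_n=fI_p$.

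It remains to show that every cyclic rotation also equals $fI_p$. Set $C=A_2\cdots A_n$, so that $A_1C=fI_p$. Since these are square matrices over the commutative ring $K[X_1,\dots,X_n]$, which embeds in its fraction field, two cases arise.

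If $f\neq 0$, then $A_1$ is invertible over the fraction field with inverse $f^{-1}C$. Hence $CA_1=fI_p$ as well, giving $A_2\cdots A_nA_1=fI_p$. Iterating this rotation argument covers every $l$.

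If $f=0$, then rotations need not vanish, so I would note that this degenerate case requires separate treatment. The simplest option is to take $B=0$, which trivially satisfies $B^n=0=fI_{np}$.

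With every diagonal block equal to $fI_p$ and all off-diagonal blocks zero, we get $B^n=fI_{np}$.
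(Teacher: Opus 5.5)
Your proof is correct and, once you reverse the orientation, uses exactly the paper's block-cyclic matrix ($B_{i,i+1}=A_i$ for $i<n$, $B_{n,1}=A_n$). Its $n$th power is block diagonal with the cyclic rotations $A_lA_{l+1}\cdots A_nA_1\cdots A_{l-1}$ on the diagonal. The paper only asserts that each rotation equals $fI_p$, whereas your fraction-field argument (for $f\neq 0$, $A_1C=fI_p$ forces $CA_1=fI_p$) supplies that step. Your separate treatment of $f=0$ with $B=0$ covers a degenerate case where the paper's $B$ can genuinely fail, since $A_1A_2=0$ does not force $A_2A_1=0$.
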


	\begin{proof}
	Let $B = (B_{ij})$ be the bloc matrix of size $np$ where
		\[
		\begin{cases}
			B_{n,1} = A_n \\
			B_{i,i+1} = A_i \quad \text{pour } 1 \leq i \leq n-1 \\
			0 \quad \text{everywhere else.}
		\end{cases}
		\]
		\begin{enumerate}
			\item Let $l$ be a non zero natural number.
			\[
			\begin{aligned}
				B_{l,l}^{(n)} &= \sum_{k=1}^{n} B_{l,k} B_{k,l}^{(n-1)} \\
				&= B_{l,l+1}\, B_{l+1,l}^{(n-1)} \\
				&= B_{l,l+1}\, B_{l+1,l+2}\, B_{l+2,l}^{(n-2)} \\
				&= B_{l,l+1}\, B_{l+1,l+2} \cdots B_{l+(n-l),l}^{(n-(n-l))} \\
				&= \left( \prod_{p=l}^{n-1} B_{p,p+1} \right) B_{l+(n-l),l}^{(n-(n-l))} \\
				&= \left( \prod_{p=l}^{n-1} B_{p,p+1} \right) B_{n,l}^{(l)} \\
				&= (\cdots) B_{n,l}^{(l)} \\
				&= (\cdots) B_{n,1} B_{1,l}^{(l-1)} \\
				&= (\cdots) B_{n,1} \left( \prod_{p=1}^{l-2} B_{p,p+1} \right) B_{l-1,l}^{(1)} \\
				&= (\cdots) B_{n,1} \left( \prod_{p=1}^{l-2} B_{p,p+1} \right) B_{l-1,l} \\
			    &= (\cdots) B_{n,1} (\cdots) B_{l-1,l} \\
			    &= A_l A_{l+1} \cdots A_{n-1} A_n A_1 A_2 \cdots A_{l-2} A_{l-1} \\
			    &= fI_p
			\end{aligned}
			\]
			
			\item Let $l,s$ be non zero natural numbers such that $l \neq s$
			\[
			\begin{aligned}
		        B_{l,s}^{(n)} &= \sum_{k=1}^{n} B_{lk} B_{k,s}^{(n-1)} \\
		                      &= (\cdots) B_{n,1} (\cdots) B_{l-1,s}
			\end{aligned}
			\]
			and $B_{l-1,s}=0$ (the zero matrix) by definition of the matrix $B$.
			
			Hence,
			\[
			(B)^n = I_n \otimes (fI_p) = fI_{np}
			\]
		\end{enumerate}	
	\end{proof}



\bibliography{fomatati_ref}
\addcontentsline{toc}{section}
{References}

\end{document}